\newtheorem{theorem}{Theorem}
\newtheorem{conj}{Conjecture}
\newtheorem{lemma}{Lemma}
\title{Left-orderablity for surgeries on $(-2,3,2s+1)$-pretzel knots}
\author{Zipei Nie}
\begin{document}
\maketitle

\begin{abstract}
In this paper, we prove that the fundamental group of the manifold obtained by Dehn surgery along a $(-2,3,2s+1)$-pretzel knot ($s\ge 3$) with slope $\frac{p}{q}$ is not left orderable if $\frac{p}{q}\ge 2s+3$, and that it is left orderable if $\frac{p}{q}$ is in a neighborhood of zero depending on $s$.
\end{abstract} 
\section{Introduction}
For a nontrivial group $G$, we say it is left-orderable if it admits a total ordering that is invariant under left multiplication, that is, $g<h$ implies $f g<f h$ for all elements $f,g,h\in G$. By convention, the trivial group is not left-orderable.

For any rational homology $3$-sphere $Y$, the Euler characteristic of the Heegaard Floer homology $\widehat{HF}(Y)$ is $|H_1(Y;\mathbb{Z})|$. We say a closed $3$-manifold $Y$ is an L-space if it is a rational homology $3$-sphere and the rank of $\widehat{HF}(Y)$ is $|H_1(Y;\mathbb{Z})|$.

The following conjecture proposed by Boyer, Gordon, and Watson \cite{Boyer} suggests a close relation between the existence of a left order and the L-space.

\begin{conj}\label{boyer-conj}
An irreducible rational homology $3$-sphere is an $L$-space if and only if its fundamental group is not left-orderable.
\end{conj}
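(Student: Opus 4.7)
The plan is to attempt both directions of the biconditional separately, since they rely on quite different machinery, and to acknowledge upfront that this is a notoriously hard statement that no one has proved in full generality; what I would realistically write is a strategy for a conditional or partial result along the lines used in the rest of the paper.

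For the direction \emph{L-space $\Rightarrow$ not left-orderable}, I would route through taut foliations. By a theorem of Ozsv\'ath--Szab\'o, an L-space admits no co-oriented taut foliation, so it suffices to prove the contrapositive: if $\pi_1(Y)$ is left-orderable, then $Y$ admits a co-oriented taut foliation, hence is not an L-space. The classical strategy for producing a foliation from a left order goes via an embedding of $\pi_1(Y)$ into $\mathrm{Homeo}_+(\mathbb{R})$, followed by a Calegari--Dunfield style construction of an essential lamination that can, in favorable situations, be promoted to a taut foliation. Each step fails in known counterexamples at the level of generality of the conjecture, so in a paper I would only hope to push this through under additional hypotheses (such as surgery slope constraints, as in the main theorem in the abstract).

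For the reverse direction \emph{not L-space $\Rightarrow$ left-orderable}, I would try to start from a co-oriented taut foliation (assuming its existence, since producing one on an arbitrary non-L-space is itself open) and use the holonomy representation $\pi_1(Y)\to\mathrm{Homeo}_+(S^1)$ coming from the universal circle of the foliation. Combined with a suitable cohomological vanishing (bounded Euler class), this would lift to an action on $\mathbb{R}$, yielding a left order on $\pi_1(Y)$ provided that no nontrivial element acts trivially. The standard way to ensure nontriviality is via a non-vanishing result for the Euler class or via taut foliation techniques due to Calegari--Dunfield and Boyer--Rolfsen--Wiest.

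The main obstacle, and the reason the conjecture remains open, is precisely the missing bridge between Heegaard Floer data (rank of $\widehat{HF}$) and dynamical data (a faithful order-preserving action of $\pi_1(Y)$ on $\mathbb{R}$). All current proofs in special families, including graph manifolds and many pretzel surgeries, proceed by checking both sides independently for the family in question, rather than by an intrinsic proof of the equivalence. In the context of this paper I would therefore not try to prove the conjecture in its stated generality; instead, the conjecture is included to motivate the two-sided theorem about $(-2,3,2s+1)$-pretzel surgeries, which provides further supporting evidence on a specific infinite family of manifolds.
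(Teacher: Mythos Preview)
The statement is labeled a \emph{conjecture} in the paper, and the paper offers no proof of it whatsoever; it is cited from Boyer--Gordon--Watson and used only as motivation for the surgery results on $(-2,3,2s+1)$-pretzel knots. You recognize this correctly in your final paragraph, so in that sense your assessment matches the paper exactly: there is nothing to prove here, and the paper does not attempt to.

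The preceding paragraphs of your proposal are therefore not comparable to anything in the paper. They are a reasonable survey of the known heuristic bridges (taut foliations, universal circles, bounded Euler class) between the Heegaard Floer and orderability sides, but none of this appears in the paper, and you yourself note that each step breaks down in general. If the intent was to produce a proof, there is a genuine gap: no argument is given, only an outline of a program whose missing pieces are exactly the content of the conjecture. If the intent was to explain why the statement is included without proof, then your last paragraph alone suffices and is accurate.
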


In \cite{Boyer}, Conjecture \ref{boyer-conj} is verified for geometric, non-hyperbolic $3$-manifolds and the $2$-fold branched covers of non-split alternating links.

The surgery formula of the Heegaard Floer homology in \cite{szabo} gives rise to the following relation between the L-space and Dehn surgery. It is a way to produce many hyperbolic L-spaces.

\begin{theorem}
For any knot $K$, if a positive surgery of $K$ yields an L-space, then the $\frac{p}{q}$-surgery of $K$ is an L-space if and only if $\frac{p}{q}\ge 2 g(K)-1$, where $g(K)$ is the genus of $K$.
\end{theorem}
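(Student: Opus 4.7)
The plan is to invoke the rational surgery mapping cone formula cited as \cite{szabo} together with the structure theorem for L-space knots. First, I would reduce to the case of an integer surgery: given a positive surgery $S^3_{p/q}(K)$ that is an L-space, a monotonicity argument on the correction terms (comparing $d$-invariants of surgeries sharing Spin${}^c$-structures) forces some sufficiently large positive integer $N$-surgery on $K$ to be an L-space as well. Thus $K$ is an L-space knot in the usual sense.

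Once that is known, the structure theorem for L-space knots applies: $\widehat{HFK}(K,i)$ is either $0$ or $\mathbb{F}$ in each Alexander grading $i$, the set of nonzero gradings is symmetric about $0$, and the top nonzero grading equals the Seifert genus $g(K)$. This severely constrains the input to the mapping cone: the knot Floer complex $CFK^\infty(K)$ is a staircase entirely determined by the Alexander polynomial, and the subquotient complexes $A_s^+$ and $B_s^+$ are computed from the torsion coefficients $t_s(K)=\sum_{j\ge 1}j\,a_{|s|+j}$ of the symmetrized Alexander polynomial.

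Next, I would assemble the mapping cone for arbitrary $p/q>0$,
\[
\mathbb{X}^+(K,p/q)\;=\;\mathrm{Cone}\bigl(D\colon \bigoplus_{s}A_{\lfloor s/q\rfloor}^+ \;\longrightarrow\;\bigoplus_{s}B_s^+\bigr),
\]
and read off $\widehat{HF}(S^3_{p/q}(K))$ from the hat-level truncation. Because the $B_s^+$ each contribute the same rank as $\widehat{HF}(S^3)$, the surgery is an L-space exactly when the total rank contributed by the $A_s^+$'s equals $|p|$ and $D$ is surjective. Given the staircase shape of $CFK^\infty$, this combinatorial condition translates directly into the inequality $p/q\ge 2g(K)-1$: whenever $p/q$ drops below this threshold, at least one $A_s^+$ with $|s|<g(K)$ is forced to carry extra $\mathbb{F}$-summands coming from the nonzero group $\widehat{HFK}(K,g(K))$, producing excess rank and obstructing the L-space property.

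The main obstacle will be the bookkeeping in the mapping cone rank computation — showing that the threshold is \emph{exactly} $2g(K)-1$ rather than some nearby quantity, and that the argument extends cleanly from integer slopes to rational slopes $p/q$. A convenient device is the Hom--Wu $\nu^+$-invariant: one checks that L-space knots satisfy $\nu^+(K)=g(K)$, and that $S^3_{p/q}(K)$ is an L-space iff $p/q\ge 2\nu^+(K)-1$, which packages the rank count into a single numerical inequality.
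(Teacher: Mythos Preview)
The paper does not supply a proof of this theorem: it is stated as background, attributed to the rational surgery formula of Ozsv\'ath and Szab\'o \cite{szabo}, and then used without further argument. There is therefore nothing in the paper to compare your proposal against.

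That said, your sketch is a faithful outline of the argument underlying the cited result. One small quibble: the reduction from a rational L-space slope to an integer one need not pass through a separate $d$-invariant monotonicity step; the mapping cone for $S^3_{p/q}(K)$ already involves the same complexes $A_s^+$ that govern large integer surgeries, so the staircase and rank analysis handles integer and rational slopes uniformly. The $\nu^+$ repackaging you mention at the end is a clean modern shortcut (Hom, and later Hom--Wu), though it postdates the original Ozsv\'ath--Szab\'o argument.
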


According to \cite{baker}, the $(-2,3,2s+1)$-pretzel knots for integers $s\ge 3$ are the only hyperbolic knots with L-space surgeries up to mirroring among all Montesinos knots. It is no wonder these knots draw attention in the literature. We state Conjecture \ref{boyer-conj} restricted on these knots as follows.

\begin{conj}
The fundamental group of the manifold obtained by Dehn surgery along a $(-2,3,2s+1)$-pretzel knot ($s\ge 3$) with slope $\frac{p}{q}$ is left orderable if and only if $\frac{p}{q}<2s+3$.
\end{conj}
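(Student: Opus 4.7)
The conjecture splits into two implications which would be handled by very different techniques, and my plan is to attack each separately and then address the gap between them.

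For the non-left-orderability direction, assume $p/q \ge 2s+3$ and suppose for contradiction that $\pi_1$ of the $p/q$-surgery admits a left order. Starting from the pretzel presentation of $\pi_1(S^3 \setminus K)$ for $K = P(-2,3,2s+1)$ with explicit formulas for the meridian $\mu$ and preferred longitude $\lambda$, I would exploit the surgery relation $\mu^p \lambda^q = 1$. The strategy is to track the signs of a carefully chosen finite set of meridional conjugates: each of the three tangles of the pretzel imposes inequalities between such conjugates, and combining them produces forced sign patterns. The numerical coincidence $2s+3 = 2g(K)-1$, where $g(K) = s+2$ is the Seifert genus, is expected to make the argument tight, exactly matching the L-space threshold predicted by Conjecture~\ref{boyer-conj}.

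For the left-orderability direction in a neighborhood of $p/q = 0$, I would construct non-abelian representations $\rho : \pi_1 \to \widetilde{PSL(2,\mathbb{R})}$ of the surgered manifold. The canonical component of the $PSL(2,\mathbb{C})$ character variety of $S^3 \setminus K$ contains the discrete faithful representation, at which the meridian is parabolic; nearby, a one-parameter family of real representations exists by Thurston-type deformation theory, and the associated slope function is continuous and surjects onto an open neighborhood of $0$. Lifting to the universal cover $\widetilde{PSL(2,\mathbb{R})}$ and pulling back the left-invariant order on $\mathbb{R}$ then produces a left order on $\pi_1$ of the surgery, provided the representation is nontrivial enough to be faithful on some left-orderable quotient (which follows from hyperbolicity of $K$).

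The main obstacle is that the non-left-orderability argument yields an upper bound at $p/q = 2s+3$, while the representation-theoretic construction only provides left-orderings in a small neighborhood of $p/q = 0$; the \emph{middle} range $0 < p/q < 2s+3$ is left open. Bridging this gap would require a global analysis of the real locus of the relevant character variety component, showing that the slope function sweeps out the entire half-line $(-\infty, 2s+3)$ as one moves along a continuous arc of real representations, together with a uniform argument that the $\widetilde{PSL(2,\mathbb{R})}$-lift remains suitably faithful throughout. Establishing this global statement uniformly in $s \ge 3$ is the principal difficulty; accordingly, this paper resolves only the two endpoint regimes.
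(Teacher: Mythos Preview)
Your overall architecture---prove non-left-orderability for $p/q \ge 2s+3$, prove left-orderability for $p/q$ near $0$, and explicitly leave the middle range open---is exactly what the paper achieves; the statement is a conjecture and the paper does not close the gap either.

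For the non-left-orderability direction your sketch is in the same spirit as the paper's argument, which follows the Jun--Nakae strategy: one uses Nakae's two-generator presentation of the knot group together with the explicit words for $M$ and $L$, passes to the dynamic realization $\rho:G_{K_s}(p,q)\to\mathrm{Homeo}_+(\mathbf{R})$, and derives a chain of inequalities (the key ones being $clc\,x<lcl\,x$ and, using $p/q\ge 2s+3$, $l\,x<c\,x$) that collapse to a contradiction. Your phrase ``signs of meridional conjugates from the three tangles'' is vaguer than this, but points in the same direction.

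For the left-orderability direction your plan diverges from the paper and contains a genuine gap. You propose to deform the discrete faithful representation; but Thurston-type deformation produces a one-complex-parameter family in $\mathrm{PSL}_2\mathbf{C}$, and there is no automatic reason those deformations land in $\mathrm{PSL}_2\mathbf{R}$, let alone lift to $\widetilde{\mathrm{PSL}_2\mathbf{R}}$ with the correct peripheral behavior. The paper does not attempt any direct character-variety construction. Instead it invokes Culler--Dunfield's theorem, whose input is \emph{not} the hyperbolic structure but rather two combinatorial/algebraic conditions: (i) the knot complement is \emph{lean}, verified via the Hatcher--Oertel edgepath classification of incompressible $\partial$-incompressible surfaces for Montesinos knots (showing the only $0$-slope surfaces are Seifert surfaces, hence fibers); and (ii) the Alexander polynomial $\Delta_{K_s}$ has a simple root on the unit circle, verified via Hironaka's pretzel formula and Parry's theorem identifying the relevant polynomial as a Salem polynomial times cyclotomics. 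These two verifications are the actual technical content; your proposal does not supply a substitute for either.
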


In 2004, Jinha Jun \cite{Jun} proved that, for $s=3$ and the slope $\frac{p}{q}\ge 10$ and $p$ is odd, then the manifold obtained by Dehn surgery does not contain an $\mathbb{R}$-covered foliation. In 2013, Yasuharu Nakae \cite{nakae} extends Jun's result to the case where $\frac{p}{q}\ge 4s+7$ and $p$ is odd. In the viewpoint of left-orderable group, they essentially proved that the fundamental group is not left-orderable for $s=3$ and $\frac{p}{q}\ge 10$, and for $\frac{p}{q}\ge 4s+7$, respectively. 

With a different strategy, Adam Clay and Liam Watson \cite{Clay-2} in 2012 proved that the fundamental group is not left-orderable for $\frac{p}{q}\ge 2s+11$.

In Section \ref{T2-proof}, following the strategy set up by Jun and Nakae, we improve their results by proving the following theorem.

\begin{theorem}\label{main}
If $\frac{p}{q}\ge 2s+3$, then the fundamental group of the manifold obtained by Dehn surgery along a $(-2,3,2s+1)$-pretzel knot ($s\ge 3$) with slope $\frac{p}{q}$ is not left orderable.
\end{theorem}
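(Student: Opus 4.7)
The plan is to follow the strategy of Jun and Nakae. I would start from an explicit two-generator presentation of the knot group $G = \pi_1(S^3\setminus K)$ of the $(-2,3,2s+1)$-pretzel knot $K$, together with explicit words for the meridian $\mu$ and longitude $\lambda$ in terms of those generators. Since the genus of $K$ is $s+2$, one has $\lambda = \mu^{-(2s+3)} w$ for a word $w$ of zero $\mu$-exponent, and the surgery relation $\mu^p \lambda^q = 1$ in $G_{p/q} = G/\langle\!\langle\mu^p\lambda^q\rangle\!\rangle$ then has the form $\mu^{p-q(2s+3)}\cdot(\text{conjugates of }w)^q = 1$, with a non-negative power of $\mu$ on the left exactly when $\frac{p}{q}\ge 2s+3$. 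This alignment is what makes $2s+3$ the natural threshold to aim for.

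Assume for contradiction that $G_{p/q}$ admits a left-invariant order. After reversing the order if needed, I may suppose $\mu \ge 1$; a short argument via the relations rules out $\mu = 1$, so $\mu > 1$. I would then split into cases based on the signs (positive, negative, or trivial) of the remaining generators and propagate each case through the three twist-region relations. The longest of these, coming from the $(2s+1)$-twist region, should produce a chain of inequalities between elements of the form $a\mu^k$; combining the chain with the $-2$- and $3$-twist relations yields an upper (or lower) bound on $w$ in terms of a specific power of $\mu$. Substituting into $\mu^{p-q(2s+3)} w^q = 1$ and using $p-q(2s+3)\ge 0$ then forces $\mu \le 1$, contradicting the sign choice in each case and concluding the proof.

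The main obstacle is extracting tight enough bounds from the $(2s+1)$-twist relation to hit the sharp threshold $2s+3$ rather than the $4s+7$ of Nakae. In the previous arguments one effectively uses a two-term comparison per crossing, losing a factor of two; the plan is to pair adjacent crossings and use the full three-term pretzel relation directly, which should save exactly that factor. A secondary difficulty is handling degenerate cases where some generator equals a specific power of $\mu$; these should be excluded by an auxiliary argument inspecting the $-2$- and $3$-twist relations, though a few exceptional sub-cases may need to be verified by a direct calculation separate from the main induction on the chain of inequalities.
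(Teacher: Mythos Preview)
Your high-level plan follows Jun--Nakae, as does the paper, but the proposal is missing the specific device that makes the sharp bound $2s+3$ accessible, and the suggested fix (``pair adjacent crossings and use the full three-term pretzel relation'') does not correspond to any concrete mechanism. Tracking only the signs of generators in the left order is essentially what Nakae already does, and it is exactly this that costs the factor of two: a bare inequality $g>1$ does not compose under substitution, since knowing $a>1$ and $b>1$ says nothing about conjugates of $ab^{-1}$. The paper instead passes to the dynamic realization $\rho:G_{K_s}(p,q)\hookrightarrow\mathrm{Homeo}_+(\mathbf{R})$ and works with pointwise inequalities of the form ``$gx<hx$ for all $x\in\mathbf{R}$''. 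These are strictly stronger than $g<h$ and, crucially, are stable under substitution: from $clc\,x<lcl\,x$ for every $x$ one immediately gets $c(lcl^{t}x)<lcl^{t+1}x$ and $l^{t}cl(cx)<l^{t+1}clx$, hence by a one-line induction $c^{s}lc\,x<lcl^{s}x$ and $clc^{s}x<l^{s}cl\,x$. That single lemma $clc\,x<lcl\,x$ --- a direct consequence of Nakae's one relator together with the fact that the meridian $c=k^{q}$ acts without fixed points --- replaces your case analysis entirely, and the induction just described is what produces the improvement from $4s+7$ to $2s+3$.

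Two further points of divergence. First, the paper uses Nakae's two-generator, one-relator presentation $\langle c,l\mid clc\bar{l}\bar{c}\bar{l}^{\,s}\bar{c}\bar{l}clcl^{s-1}\rangle$, not three twist-region relations; the shape of this single relator is precisely what makes the $clc$ versus $lcl$ comparison visible. Second, the surgery relation is not used in the form $\mu^{p-q(2s+3)}w^{q}=1$ but via the element $k$ with $M=k^{q}$, $L=k^{-p}$: one shows $k$ has no fixed points, normalizes to $kx>x$, and then invokes $p/q\ge 2s+3$ exactly once as $k^{-p+(2s+3)q}x\le x$, i.e.\ $M^{s-2}LM^{s+5}x\le x$, inside a short chain of the pointwise inequalities above to obtain $lx<cx$ for all $x$. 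Combining $lx<cx$ with $clc\,x<lcl\,x$ then yields $clx<clx$, the desired contradiction. Your outline contains none of these three ingredients, and without at least the first the argument will not close at $2s+3$.
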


Marc Culler and Nathan Dunfield \cite{culler} defined the knot complement of $K$ to be lean, if every closed essential surface in the manifold $S^3_0(K)$ obtained by longitudinal Dehn surgery is a fiber in a fibration over circle. In the general case where $S^3$ is replaced by another integral homology sphere, we need an extra condition that the resulting manifold is prime. We do not need to bother about it here, because Gabai \cite{gabai} showed that $S^3_0(K)$ is irreducible for any nontrivial knot $K$. In particular, $S^3_0(K)$ is prime for any knot $K$.

Culler and Dunfield proved the following result by considering the representations $\pi_1(S^3-K)\to \widetilde{\textrm{PSL}_2 \mathbf{R}}$ that are elliptic on the fundamental group of the torus boundary.

\begin{theorem}\label{culler-dunfield}
For an knot $K$ in $S^3$ with lean knot complement, if its Alexander polynomial $\Delta_K (t)$ has a simple root on the unit circle, then there exists $\varepsilon>0$ such that the fundamental group of the manifold obtained by Dehn surgery along $K$ with any slope $\frac{p}{q}$ in $(-\varepsilon,\varepsilon)$ is left orderable.
\end{theorem}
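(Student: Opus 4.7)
The plan is to produce, for each rational slope $p/q$ in a sufficiently small interval $(-\varepsilon,\varepsilon)$, a non-trivial homomorphism $\tilde\rho: \pi_1(S^3_{p/q}(K)) \to \widetilde{\textrm{PSL}_2 \mathbf{R}}$. Since $\widetilde{\textrm{PSL}_2 \mathbf{R}}$ acts faithfully on $\mathbb{R}$ by orientation-preserving homeomorphisms, every non-trivial subgroup of it is left-orderable; the Boyer-Rolfsen-Wiest criterion then implies that a surjection from the fundamental group of an irreducible $3$-manifold onto a non-trivial left-orderable group forces that manifold group to be left-orderable. The irreducibility of $S^3_{p/q}(K)$ for $p/q$ near zero is where leanness will eventually be invoked.

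\textbf{Construction via deformation.} A simple root $\alpha = e^{i\theta}$ of $\Delta_K$ on the unit circle gives an abelian representation $\rho_0: \pi_1(S^3\setminus K) \to \textrm{PSL}_2\mathbf{R}$ sending the meridian $\mu$ to an elliptic rotation (by $\theta$) and the longitude $\lambda$ to the identity (since $\lambda$ dies in the abelianization). By a Heusener-Porti-Suárez-Peiró type deformation argument, simplicity of the root makes $\rho_0$ a smooth point on a real-analytic one-parameter arc $\{\rho_t\}$ of $\textrm{PSL}_2\mathbf{R}$-characters passing into the irreducible stratum. Because $\rho_t(\mu)$ remains elliptic for small $t$ and $\mu,\lambda$ commute, $\rho_t(\lambda)$ lies in the same compact one-parameter elliptic subgroup and is elliptic too. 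The knot complement has vanishing integral $H^2$, so $\rho_t$ lifts real-analytically to $\tilde\rho_t: \pi_1(S^3\setminus K) \to \widetilde{\textrm{PSL}_2 \mathbf{R}}$. Elliptic elements in $\widetilde{\textrm{PSL}_2 \mathbf{R}}$ carry well-defined real rotation numbers, giving real-analytic functions $a(t) := \mathrm{rot}(\tilde\rho_t(\mu))$ and $b(t) := \mathrm{rot}(\tilde\rho_t(\lambda))$; the representation descends through the $p/q$-filling exactly when $p\,a(t)+q\,b(t)=0$, i.e., $p/q = -b(t)/a(t)$. At $t=0$ we have $a(0)\neq 0$ and $b(0)=0$, and a first-order computation on the character variety — pairing the tangent vector at $\rho_0$ against peripheral classes — yields $b'(0)\neq 0$, the key input being $\Delta_K'(\alpha)\neq 0$ from simplicity of the root. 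Therefore $t\mapsto -b(t)/a(t)$ is non-constant analytic through $0$, its image covers an open interval $(-\varepsilon,\varepsilon)$, and each rational $p/q$ in this interval yields the desired non-trivial representation.

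\textbf{Main obstacle.} The decisive technical point is ensuring that the deformation from the abelian $\rho_0$ remains inside the $\textrm{PSL}_2\mathbf{R}$-character variety rather than wandering into $\textrm{SL}_2\mathbb{C}$; this requires a realness argument pinned to the fact that $\alpha$ lies on the unit circle, together with the non-degeneracy supplied by the simple-root hypothesis. A second delicate input is using leanness to secure irreducibility of $S^3_{p/q}(K)$ for $p/q$ near $0$: closed essential surfaces in $S^3_0(K)$ correspond via Culler-Shalen theory to ideal points of the character curve, and leanness — by forcing every such surface to be a fiber in a fibration — controls both the ends of the deformation arc and the slopes that could yield reducible fillings. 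Once these two points are secured, Boyer-Rolfsen-Wiest closes the argument.
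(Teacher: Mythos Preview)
The paper does not prove this theorem; it is quoted as a result of Culler and Dunfield, with only the one-sentence indication that the proof goes via representations $\pi_1(S^3\setminus K)\to\widetilde{\textrm{PSL}_2\mathbf{R}}$ that are elliptic on the peripheral torus. Your proposal is an attempt to reconstruct that argument, and the overall architecture is right: start from the abelian representation attached to a simple unit root of $\Delta_K$, deform into irreducibles \`a la Heusener--Porti--Su\'arez-Peir\'o, lift to $\widetilde{\textrm{PSL}_2\mathbf{R}}$ using $H^2=0$, and read off realized slopes from peripheral rotation numbers.

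Where your sketch goes wrong is in the role you assign to leanness. You say it is needed to ``secure irreducibility of $S^3_{p/q}(K)$ for $p/q$ near $0$,'' but that is not what it does. Irreducibility of the $0$-surgery is Gabai's theorem (the paper states this explicitly just before quoting the theorem), and nearby fillings are handled by standard finiteness of reducible slopes; neither uses leanness. In Culler--Dunfield the lean hypothesis is used to control the \emph{translation extension locus} along the axis $\{\mathrm{rot}(\tilde\rho(\lambda))\in\mathbb{Z}\}$: any ideal point of the irreducible arc landing on that axis would produce, via Culler--Shalen, a closed essential surface in $S^3_0(K)$, and leanness forces such a surface to be a fiber, which pins the ideal point to a specific location. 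This is what guarantees that the arc through the abelian point genuinely crosses the axis and hence that $-b(t)/a(t)$ sweeps out an open interval about $0$. Your alternative mechanism --- a direct computation that $b'(0)\neq 0$ from $\Delta_K'(\alpha)\neq 0$ by ``pairing the tangent vector against peripheral classes'' --- is asserted rather than argued, and in any case would not by itself rule out the arc returning to the axis before realizing all nearby slopes; that global control is exactly the content of the lean hypothesis.
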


As an example demonstrated in their work, they proved that the fundamental group of the manifold obtained by Dehn surgery along a $(-2,3,7)$-pretzel knot with slope $\frac{p}{q}$ is left orderable if $\frac{p}{q}<6$.

In Section \ref{sec-main-plus}, we prove the following result by verifying the conditions of Theorem \ref{culler-dunfield} for all $(-2,3,2s+1)$-pretzel knots ($s\ge 3$). 

\begin{theorem}\label{main-plus}
For each integer $s\ge 3$, there exists $\varepsilon>0$ such that the fundamental group of the manifold obtained by Dehn surgery along a $(-2,3,2s+1)$-pretzel knot with any slope $\frac{p}{q}$ in $(-\varepsilon,\varepsilon)$ is left orderable.
\end{theorem}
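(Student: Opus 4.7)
The plan is to verify the two hypotheses of Theorem \ref{culler-dunfield} for $K_s := P(-2,3,2s+1)$ with $s\ge 3$: that the complement $S^3\setminus K_s$ is lean, and that $\Delta_{K_s}(t)$ has a simple root on the unit circle. The case $s=3$ is the worked example in \cite{culler}, so the task is to extend both verifications uniformly in $s$.

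For leanness I would first recall that, by the classification of essential surfaces in Montesinos knot complements due to Oertel and Hatcher--Oertel, $K_s$ is \emph{small}: its complement contains no closed essential surface. Second, $K_s$ is \emph{fibered}: by \cite{baker} it admits a positive L-space surgery, and by Ni's theorem every L-space knot is fibered, with fiber a Seifert surface of genus equal to the Seifert genus $s+2$. Since $S^3\setminus K_s$ is hyperbolic, the monodromy $\phi$ of $K_s$ is pseudo-Anosov on $\Sigma_{s+2,1}$, and the induced monodromy on the capped-off surface $\hat\Sigma_{s+2}$ still has no invariant essential multicurve; thus $S^3_0(K_s)$ is an atoroidal $\hat\Sigma_{s+2}$-bundle over $S^1$. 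A standard argument now delivers leanness: any closed essential surface $F\subset S^3_0(K_s)$ may be isotoped to meet the surgery torus in parallel longitudinal curves, smallness of $K_s$ forces the pieces of $F$ cut along these curves to be isotopic to the fiber Seifert surface, and capping off with meridian disks of the surgery solid torus exhibits $F$ as a union of fibers.

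The second hypothesis requires explicit calculation. I would write down the Seifert matrix of a genus-$(s+2)$ Seifert surface for $K_s$ obtained by plumbing twisted bands in the standard way for three-strand pretzel knots, and from it read off a closed-form expression for $\Delta_{K_s}(t)$. Because $\Delta_{K_s}$ is reciprocal, $f_s(\theta) := \Delta_{K_s}(e^{i\theta})$ is real-valued on $[0,\pi]$; I would exhibit $\theta_1<\theta_2$ in $(0,\pi)$ at which $f_s$ takes values of opposite sign to produce a root, and then confirm simplicity by checking that $\Delta_{K_s}$ and its derivative have no common zero at that point.

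The main obstacle is the simple-root condition uniformly in $s$. For any fixed $s$ it is a finite computation, but extending to all $s\ge 3$ demands a tractable form of $\Delta_{K_s}(t)$ — ideally a decomposition into a bounded-degree polynomial plus a geometric-series-like tail in $s$ — so that both the sign changes of $f_s$ on the upper half of the unit circle and the non-vanishing of $f_s'$ at the root can be controlled as $s$ varies. Leanness, by contrast, reduces to assembling existing classification results for small L-space knots.
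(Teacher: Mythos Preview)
Your overall plan---verify leanness and the simple-root condition, then invoke Theorem~\ref{culler-dunfield}---matches the paper's. Both verifications, however, diverge from the paper, and the leanness step contains a genuine gap.

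For leanness, you correctly quote Oertel for smallness, but smallness only rules out \emph{closed} essential surfaces in $S^3\setminus K_s$; it says nothing about incompressible, $\partial$-incompressible surfaces with $0$-slope boundary, which is exactly what the pieces of $F$ become after cutting along the surgery torus. Such a piece is not forced to be a Seifert surface by smallness alone, and your atoroidality remark (even granting that the capped-off monodromy remains pseudo-Anosov, which you do not justify) only excludes essential tori, not higher-genus non-fiber surfaces. The paper fills this gap with the Hatcher--Oertel edgepath classification of incompressible, $\partial$-incompressible surfaces in Montesinos knot complements: it tabulates the boundary slopes arising from all type~II and type~III admissible edgepath systems for $K_s$, shows that type~I systems always yield positive slope, and finds that slope $0$ occurs only for the edgepath system producing the Seifert surface. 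Fiberedness (cited from Gabai rather than deduced via Ni) then gives uniqueness of the Seifert surface up to isotopy, finishing leanness.

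For the Alexander polynomial, your direct plan via the Seifert matrix and sign changes on the unit circle is workable but, as you correctly flag, awkward to make uniform in $s$. The paper bypasses the computation entirely: by Hironaka, $\Delta_{K_s}(t)=Q_{2,3,2s+1}(-t)$, and by Parry $Q_{2,3,2s+1}(x)$ is the growth-series denominator of a hyperbolic Coxeter group, hence a product of distinct cyclotomic polynomials and exactly one Salem polynomial. This immediately gives that all roots of $\Delta_{K_s}$ are simple, exactly two are real, and the remaining $2s+2$ lie on the unit circle---no estimates required.
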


\subsection*{Acknowledgement}
I would like to thank my advisor, Professor Zoltán Szabó, for suggesting me this problem and helping me throughout my research project.

\section{The Proof of Theorem \ref{main}}\label{T2-proof}

Let $K_s$ be a $(-2,3,2s+1)$-pretzel knot with $s\ge 3$. It is shown \cite[Proposition 2.1]{nakae} that the knot group of $K_s$ has following presentation. 
\begin{theorem}\label{good-presentation}
The knot group of $K_s$ has a presentation 
$$G_{K_s}=\langle c,l| c l c \bar{l}\bar{c}\bar{l}^s \bar{c}\bar{l} c l c l^{s-1}\rangle,$$
and an element which represents the meridian $M$ is $c$ and an element of the longitude $L$ is $\bar{c}^{2s-2} l c l^s c l^s c l \bar{c}^{2s+9}$.
\end{theorem}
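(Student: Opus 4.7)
The plan is to derive the presentation from a Wirtinger presentation of the standard $(-2,3,2s+1)$-pretzel diagram. I would draw the knot with its three vertical twist tangles containing $2$, $3$, and $2s+1$ crossings (with appropriate signs), label the $2s+6$ arcs, and write the Wirtinger relator at each crossing. The goal is to eliminate all but two generators.

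The reduction proceeds by twist-region analysis. Inside a vertical twist tangle with top arcs $x,y$, an easy induction on the number of crossings expresses each lower arc as a conjugate of $x$ or $y$ by a word in $x,y$; thus each tangle is controlled by its two incoming arcs. Working across the three tangles and closing up the diagram, every arc becomes expressible in terms of two chosen Wirtinger generators, which I would name $c$ (a meridian located on an arc of the $(2s+1)$-tangle) and $l$ (reading off an arc of the $3$-tangle). After substitution, all Wirtinger relations reduce to consequences of a single relation, which should be exactly $c l c \bar{l}\bar{c}\bar{l}^s \bar{c}\bar{l} c l c l^{s-1}$; the powers $l^s$ and $l^{s-1}$ arise precisely from iterating the conjugation pattern inside the $(2s+1)$-tangle.

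For the peripheral elements: $M = c$ holds by construction. To compute $L$, I would traverse the boundary of a regular neighborhood of $K_s$ along a Seifert-framed longitude, read off the Wirtinger word it represents, and substitute in terms of $c$ and $l$. This naive word is only determined modulo meridians; one adjusts by a power of $\bar{c}$ to make the result null-homologous in $H_1(S^3\setminus K_s)$, producing the $\bar{c}^{2s-2}$ prefix and $\bar{c}^{2s+9}$ suffix in the stated expression.

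The main obstacle is the combinatorial bookkeeping: tracking the conjugation patterns through all three twist regions, verifying that the surviving Wirtinger relation simplifies exactly to the stated form, and confirming the exponent corrections needed to make the longitude null-homologous. A reasonable sanity check is to specialize to $s=3$ and compare with the well-known presentation of the $(-2,3,7)$-pretzel knot group in the literature; since the theorem is cited from \cite{nakae}, the derivation itself is not the novel content of the paper.
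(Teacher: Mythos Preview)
The paper does not supply its own proof of this statement: it simply quotes the presentation and peripheral elements verbatim from \cite[Proposition~2.1]{nakae}, so there is no argument in the paper to compare your proposal against. Your outline --- Wirtinger presentation of the standard pretzel diagram, twist-region reduction to two generators, and reading off the longitude and correcting by a power of the meridian to make it null-homologous --- is the standard route and is essentially what Nakae does; you yourself correctly note that this derivation is cited rather than novel.
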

For coprime positive integers $p$ and $q$, let $G_{K_s}(p,q)$ denote the fundamental group of the manifold obtained by Dehn surgery along $K_s$ with slope $\frac{p}{q}$. Then we have the following presentation of $G_{K_s}(p,q)$:
$$G_{K_s}(p,q)=\langle c,l|c l c \bar{l}\bar{c}\bar{l}^s \bar{c}\bar{l} c l c l^{s-1}, M^p L^q\rangle.$$

We quote a lemma \cite[Lemma 3.1]{nakae} below. The element $k$ in the statement is a generator of the cyclic group generated by $M$ and $L$.
\begin{lemma}\label{k-lemma}
There exists an element $k$ of $G_{K_s}(p,q)$ such that $M=k^q$ and $L=k^{-p}$.
\end{lemma}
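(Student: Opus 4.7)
The plan is to construct $k$ explicitly using Bezout's identity, after first reducing the problem to a computation inside an abelian subgroup of $G_{K_s}(p,q)$.

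First I would verify that $M$ and $L$ commute in $G_{K_s}(p,q)$. In the unsurgered knot group $G_{K_s}=\pi_1(S^3\setminus K_s)$, both $M$ and $L$ lie in the peripheral subgroup, which is the image of $\pi_1(T^2)\cong\mathbb{Z}^2$ under the inclusion of the torus boundary; thus $[M,L]=1$ already in $G_{K_s}$, and this relation descends to the quotient $G_{K_s}(p,q)$. In particular, $\langle M,L\rangle\le G_{K_s}(p,q)$ is abelian.

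Next, since $\gcd(p,q)=1$, I would choose integers $a,b$ with $ap+bq=1$ and set $k:=M^bL^{-a}\in G_{K_s}(p,q)$. Using commutativity and the surgery relation $M^pL^q=1$, a direct calculation gives
\begin{align*}
k^q&=M^{bq}L^{-aq}=M^{1-ap}L^{-aq}=M\cdot(M^pL^q)^{-a}=M,\\
k^{-p}&=M^{-bp}L^{ap}=M^{-bp}L^{1-bq}=L\cdot(M^pL^q)^{-b}=L,
\end{align*}
which is exactly the conclusion of the lemma.

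There is no real obstacle here: the statement is a general fact about Dehn surgery with coprime slope on any knot, and the pretzel structure of $K_s$ plays no role beyond providing the symbols $M$ and $L$. The only step needing explicit mention is the commutativity of $M$ and $L$, which is not manifest from the presentation in Theorem \ref{good-presentation} but follows from their geometric origin on the peripheral torus. Conceptually, $\langle M,L\rangle$ is the quotient of $\mathbb{Z}^2$ by the subgroup generated by $(p,q)$, which is infinite cyclic precisely because $\gcd(p,q)=1$, and any vector primitive to $(p,q)$, such as $(b,-a)$, picks out the required generator.
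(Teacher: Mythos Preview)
Your argument is correct and is the standard Bezout construction. Note, however, that the paper does not actually supply a proof of this lemma: it merely quotes it as \cite[Lemma~3.1]{nakae}. Thus there is no paper-proof to compare against; what you have written is precisely the argument one would expect to find (and essentially what appears in Nakae's paper), so nothing further is needed.
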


By the result from the standard exercise \cite[Problem 2.25]{Clay} in Adam Clay and Liam Watson's book, for a countable group $G$ with a left ordering $<$, the dynamic realization of the left ordering has no global fixed points. In other words, we have the following theorem.

\begin{theorem}\label{realization}
For a countable group $G$ with a left ordering $<$, there exists a monomorphism $\rho: G\to \textnormal{Homeo}_+(\mathbf{R})$. Additionally, there is no $x\in \mathbf{R}$ such that $\rho(g)(x)=x$ for all $g$ in $G$.  
\end{theorem}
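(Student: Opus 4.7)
The plan is to use the classical dynamic realization construction. I would enumerate $G = \{g_0, g_1, g_2, \ldots\}$ with $g_0 = e$ and build an order-embedding $t : (G,<) \to (\mathbf{R},<)$ recursively, setting $t(e)=0$ and then, at stage $n$, placing $t(g_n)$ one unit above $\max_{i<n} t(g_i)$ if $g_n$ exceeds all previously listed elements, one unit below the minimum if it is below all of them, and at the midpoint of the images of its immediate order-neighbours among $g_0,\ldots,g_{n-1}$ otherwise. A preliminary point to check is that any nontrivial left-ordered group has cofinal and coinitial elements (powers of a non-identity element and of its inverse), so the first two cases occur infinitely often and $t(G)$ is unbounded above and below in $\mathbf{R}$; this bi-infiniteness will be essential later.

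Next, for each $g \in G$ I would define $\rho(g)$ on $t(G)$ by $\rho(g)(t(h)) = t(gh)$; left-invariance of the ordering makes this an order-preserving bijection of $t(G)$ onto itself. I would then extend it to $\mathbf{R}$ in two steps: first uniquely to the closure $\overline{t(G)}$ by continuity (any order isomorphism between order-isomorphic countable subsets of $\mathbf{R}$ extends uniquely to their closures by sending suprema to suprema), and then to each complementary open interval of $\mathbf{R} \setminus \overline{t(G)}$ by the affine map onto its image. Unboundedness of $t(G)$ guarantees all such intervals are bounded, so the resulting map lies in $\textnormal{Homeo}_+(\mathbf{R})$. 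The identity $\rho(gg') = \rho(g)\rho(g')$ holds on $t(G)$ by construction and then on all of $\mathbf{R}$ by continuity, giving a homomorphism.

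Injectivity of $\rho$ is immediate: if $\rho(g) = \mathrm{id}_{\mathbf{R}}$, then $t(g) = \rho(g)(t(e)) = t(e)$, so $g = e$. For the no-global-fixed-point statement, I would argue by contradiction: suppose some $x \in \mathbf{R}$ satisfies $\rho(g)(x) = x$ for every $g \in G$. Then each $\rho(g)$ preserves the three sets $(-\infty,x)$, $\{x\}$, and $(x,+\infty)$, so the entire $\rho(G)$-orbit of $t(e) = 0$, which is precisely $t(G)$, lies in one of them. Unboundedness of $t(G)$ rules out the two rays, while $|G| \geq 2$ together with injectivity of $t$ rules out $\{x\}$, yielding a contradiction.

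The step requiring the most attention is the continuous extension of $\rho(g)$ from $t(G)$ to $\overline{t(G)}$: one has to verify that the naive rule $t(h) \mapsto t(gh)$ really does extend continuously to every limit point of $t(G)$, and that the extension remains a bijection of $\overline{t(G)}$. Both facts reduce to the observation that the bijection $h \mapsto gh$ of $G$ preserves the left ordering and therefore commutes with taking suprema and infima of order-bounded subsets, which in turn transfers to $t(G) \subset \mathbf{R}$. Once that is in hand, the remaining checks are routine.
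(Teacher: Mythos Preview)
Your proposal is correct and spells out exactly the standard dynamic realization construction. The paper itself does not prove this theorem at all: it simply cites it as the content of \cite[Problem~2.25]{Clay}, so your write-up is in fact more detailed than what the paper provides, while following the same underlying idea that the citation points to.
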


Now we assume $G_{K_s}(p,q)$ has a left ordering and fix such a monomorphism as in Theorem \ref{realization} for the group $G_{K_s}(p,q)$. For abbreviation, we write $gx$ instead of $\rho(g)(x)$ for $g\in G_{K_s}(p,q)$ and $x\in \mathbf{R}$.

First, We prove that $\rho(k)$ has no fixed points.
\begin{lemma}\label{k-lemma-2}
For any $x\in \mathbf{R}$, we have $kx\neq x$.
\end{lemma}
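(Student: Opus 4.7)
The plan is to argue by contradiction: assume $\rho(k)$ fixes some $x \in \mathbf{R}$ and derive that $x$ is a global fixed point of the action $\rho$, contradicting Theorem \ref{realization}.

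First I would use Lemma \ref{k-lemma}: if $kx = x$, then $cx = M x = k^{q} x = x$ and $L x = k^{-p} x = x$. Substituting the longitude expression from Theorem \ref{good-presentation} and using $cx = x$ to absorb the $\bar{c}^{2s-2}$ and $\bar{c}^{2s+9}$ factors at the two ends, the equation $L x = x$ reduces to the positive word equation
\[
l\, c\, l^{s}\, c\, l^{s}\, c\, l \cdot x = x.
\]

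I would then split into three cases on the sign of $lx - x$. If $lx = x$, then both generators $c$ and $l$ of $G_{K_s}(p,q)$ fix $x$, so $x$ is a global fixed point and Theorem \ref{realization} is contradicted. Suppose instead $lx > x$. Iterating the order-preservation of $\rho(l)$ yields $l^{m} x > x$ for every $m \ge 1$, so for any $y > x$ and any $m \ge 1$ one has $l^{m} y > l^{m} x > x$, and likewise $c y > c x = x$ by order-preservation of $\rho(c)$. Reading the positive word $l\, c\, l^{s}\, c\, l^{s}\, c\, l$ on $x$ from right to left, every intermediate point is therefore strictly greater than $x$, and in particular $l\, c\, l^{s}\, c\, l^{s}\, c\, l \cdot x > x$, contradicting the reduced equation above. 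The case $lx < x$ is strictly symmetric and produces $l\, c\, l^{s}\, c\, l^{s}\, c\, l \cdot x < x$.

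The proof is essentially a monotonicity induction, and I do not expect any serious obstacle to arise. The only substantive step is the preliminary reduction of $L x = x$ to a word equation containing no inverses, using $c x = x$ to strip the outer powers of $\bar{c}$; after this reduction the contradiction is forced by the order-preservation of $\rho$ alone. The critical feature being exploited is that the middle factor $l\, c\, l^{s}\, c\, l^{s}\, c\, l$ of $L$ is a positive word in $c$ and $l$ for every $s \ge 1$, so there is no cancellation that could reverse the direction of the monotonicity.
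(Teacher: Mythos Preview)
Your proposal is correct and follows essentially the same route as the paper: both use $kx=x$ to get $cx=x$, reduce the longitude relation to the positive word $l c l^{s} c l^{s} c l$ acting on $x$, and then run a monotonicity chain in the case $lx\neq x$ to force a strict inequality contradicting $Lx=x$. The only cosmetic difference is that the paper keeps the outer $\bar{c}$-powers in the chain (using $\bar{c}^{2s-2}x=x=\bar{c}^{2s+9}x$ at the endpoints) rather than stripping them first as you do.
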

\begin{proof}
Assume $kx=x$ for some $x\in \mathbf{R}$. Then $x=k^q x=Mx=cx$. If $x=lx$, then $x$ is a global fixed point, which contradicts Theorem \ref{realization}. Otherwise, without loss of generality, we assume $x<lx$, then $x <l x<\cdots< l^{s-1}x <l^s x$. Then $x=\bar{c}^{2s-2}x<\bar{c}^{2s-2} l x=\bar{c}^{2s-2} l c x<\bar{c}^{2s-2} l c l^s x=\bar{c}^{2s-2} l c l^s c x<\bar{c}^{2s-2} l c l^s c l^s x=\bar{c}^{2s-2} l c l^s c l^s cx<\bar{c}^{2s-2} l c l^s c l^s c l x=\bar{c}^{2s-2} l c l^s c l^s c l \bar{c}^{2s+9}x=k^{-p} x=x$, contradiction. 
\end{proof}
By Lemma \ref{k-lemma-2} and that $kx$ is a continuous function of $x$, we assume $x<kx$ for any $x\in \mathbf{R}$, without loss of generality. Then $x<kx<\cdots<k^{q-1}x<k^q x = Mx = cx$ for any $x\in \mathbf{R}$.

\begin{lemma}\label{clc}
For any $x\in \mathbf{R}$, we have $clcx<lclx$.
\end{lemma}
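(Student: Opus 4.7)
The plan is to use the defining relation of $G_{K_s}$ to rewrite $clc\bar{l}\bar{c}\bar{l}$ as a word terminating in $lc$, and thereby reduce the desired inequality to the already established fact $x<cx$ by exploiting the order-preservation of $c$ and $l$.

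The first step is purely algebraic: I would split the relation $clc\bar{l}\bar{c}\bar{l}^s\bar{c}\bar{l}clcl^{s-1}=1$ as the product $(clc\bar{l}\bar{c}\bar{l})\cdot(\bar{l}^{s-1}\bar{c}\bar{l}clcl^{s-1})=1$ (using $\bar{l}^s=\bar{l}\cdot\bar{l}^{s-1}$) and invert the second factor, obtaining the identity
\[
clc\bar{l}\bar{c}\bar{l}\;=\;\bar{l}^{s-1}\bar{c}\bar{l}\bar{c}lcl^{s-1}
\]
in $G_{K_s}(p,q)$. The second step is dynamical. Since $lcl$ is an order-preserving homeomorphism of $\mathbf{R}$, proving $clcx<lclx$ for every $x$ is equivalent to proving $clc\bar{l}\bar{c}\bar{l}(w)<w$ for every $w$; using the identity above, this becomes $\bar{l}^{s-1}\bar{c}\bar{l}\bar{c}lcl^{s-1}(w)<w$. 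Applying the order-preserving homeomorphism $l^{s-1}$ to both sides and setting $v=l^{s-1}(w)$, it suffices to show $\bar{c}\bar{l}\bar{c}lc(v)<v$ for every $v\in\mathbf{R}$. Successively applying $c$, then $l$, then $c$ on the left (each an order-preserving homeomorphism), this inequality is equivalent to $lc(v)<c\cdot lc(v)$, which is precisely the instance of $x<cx$ at $x=lcv$ established in the paragraph preceding the lemma.

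The only real obstacle is locating the right factorization of the defining relation; the split is chosen so that inverting the right-hand factor produces a word ending in $lc$, after which the conclusion is immediate from $x<cx$ together with the order-preservation of the generators acting on $\mathbf{R}$.
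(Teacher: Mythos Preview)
Your proof is correct and follows essentially the same idea as the paper's: both use the defining relation of $G_{K_s}$ to reduce the inequality $clcx<lclx$ to the already-established fact $x<cx$. The paper compresses this into one line by inserting the conjugate $(lcl^{s}cl)^{-1}c(lcl^{s}cl)$ (which exceeds the identity since $c$ does) between $clc$ and $x$ and then observing via the relation that $clc\cdot(lcl^{s}cl)^{-1}c(lcl^{s}cl)=lcl$; your identity $clc\bar l\bar c\bar l=\bar l^{\,s-1}\bar c\bar l\bar c\,lcl^{\,s-1}$ and subsequent reductions are exactly the same computation written out step by step.
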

\begin{proof}
For any $x\in \mathbf{R}$, we have $clcx<clc(l cl^s c l)^{-1}c(lcl^s cl)x=(clc\bar{l}\bar{c}\bar{l}^s \bar{c} \bar{l} c l c l^{s-1})(lclx)=lclx$.
\end{proof}

\begin{lemma}\label{c-l}
If $\frac{p}{q}\ge 2s+3$, then for any $x\in \mathbf{R}$, we have $lx<cx$.
\end{lemma}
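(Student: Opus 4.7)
The plan is to argue by contradiction. Assume there exists $x_0 \in \mathbf{R}$ with $l x_0 \geq c x_0$, and derive a contradiction from the slope hypothesis $p/q \geq 2s+3$.

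The algebraic crux is to rewrite the identity $L = k^{-p}$ of Lemma \ref{k-lemma} in a more usable form. Using the formula $L = \bar{c}^{2s-2} l c l^s c l^s c l \bar{c}^{2s+9}$ together with $c = k^q$, I would conjugate to obtain the global identity $l c l^s c l^s c l = k^{(4s+7)q - p}$ in $G_{K_s}(p,q)$, and hence as an equality of homeomorphisms. The slope hypothesis gives $(4s+7)q - p \leq (2s+4)q$, so $l c l^s c l^s c l(y) \leq c^{2s+4}(y) = k^{(2s+4)q}(y)$ for every $y \in \mathbf{R}$.

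The main step is to produce the opposing lower bound. Starting from $y_0$, the innermost $l$-step gives $l y_0 \geq c y_0 = k^q y_0$ by assumption; monotonicity of $c$ and the inductive consequence $l^s z \geq c^s z$ then propagate this bound through the word, and iterating through each of the four $l$-blocks in $l c l^s c l^s c l$ yields $l c l^s c l^s c l(y_0) \geq c^{2s+5}(y_0) = k^{(2s+5)q}(y_0)$. Since $k^{(2s+5)q}(y_0) > k^{(2s+4)q}(y_0)$, this contradicts the upper bound above.

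The principal obstacle is that the hypothesis only supplies $l \geq c$ at the single point $y_0$, whereas the iterated lower bound needs the inequality at every orbit point visited by the word. I would bridge this by continuity: the set $B = \{x \in \mathbf{R} : l x \geq c x\}$ is closed, so either $B = \mathbf{R}$, in which case the iteration above goes through verbatim, or $B$ has a boundary point $y^*$ with $l y^* = c y^*$. In the latter case I would mimic the proof of Lemma \ref{k-lemma-2}, starting at $y^*$ and building a chain of inequalities that uses the single equality $l y^* = c y^*$ in place of $c x = x$, together with the strict monotonicity $l > \mathrm{id}$ derivable from Lemma \ref{clc} and the relation (by substituting both $clc$ factors in $(clc)\bar{l}\bar{c}\bar{l}^s\bar{c}\bar{l}(clc) = l^{1-s}$ with $lcl$, which simplifies to $l^{2-s}$), to arrive at the contradiction $y^* < L(y^*) = k^{-p}(y^*) < y^*$. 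Coordinating the single fixed-point equality with the strict $l$-increases so that the chain closes correctly is the most delicate part of the argument.
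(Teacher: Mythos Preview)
Your conjugated identity $lcl^scl^scl = k^{(4s+7)q-p}$ and the resulting global bound $lcl^scl^scl \le c^{2s+4}$ are correct, and your Case~1 (when $l \ge c$ holds on all of $\mathbf{R}$) does yield the contradiction $c^{2s+5}(y_0)\le lcl^scl^scl(y_0)\le c^{2s+4}(y_0)$. Your derivation of $l>\mathrm{id}$ from Lemma~\ref{clc} and the group relation is also correct and pleasant.

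The genuine gap is Case~2. The proof of Lemma~\ref{k-lemma-2} works because $cx=x$ makes every power of $c$ in the longitude act trivially at $x$, so the whole chain is driven by $lx>x$ alone. The hypothesis $ly^*=cy^*$ does nothing comparable: after the first step of the longitude word you have left $y^*$, and at the new point you have neither $l\ge c$ nor any usable substitute. The tools you list --- $l>\mathrm{id}$, $c>\mathrm{id}$, and the single equality $ly^*=cy^*$ --- only let you replace the one innermost $l$ by $c$ and otherwise bound each $l$-block below by the identity, which gives at best $lcl^scl^scl(y^*)>c^{4}(y^*)$; this is nowhere near the $c^{2s+5}$ needed to contradict the upper bound. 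Your target inequality $y^*<L(y^*)$ therefore cannot be reached by the sketch you give. You have correctly located the obstacle but not removed it.

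The paper's proof avoids the obstacle entirely by never comparing $l$ and $c$ pointwise. Instead it iterates Lemma~\ref{clc} to obtain the \emph{global} inequalities $c^slc<lcl^s$ and $clc^s<l^scl$ (each induction step is a single application of $clc<lcl$ at a shifted argument). Writing $M^{s-2}LM^{s+5}c=\bar c^{\,s}(lcl^s)c(l^scl)\bar c^{\,s+3}$ and substituting these two bounds collapses the expression past $lc^3l\bar c^{\,3}$, after which one more use of $clc<lcl$ gives $cx>lx$ directly for every $x$. The key idea you are missing is that Lemma~\ref{clc}, iterated, already trades $l^s$-blocks against $c^s$-blocks everywhere, so no pointwise hypothesis $l\ge c$ is ever required.
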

\begin{proof}
For any $x \in \mathbf{R}$ and $t\in \mathbf{Z}$, we have $c(lcl^t x)=(clc)l^{t} x<(lcl)l^{t}x=lcl^{t+1} x$ and $l^tcl(cx)=l^t(clc)x<l^t(lcl)x=l^{t+1}c lx$. By induction on the integer $t$, we get $c^slcx<lcl^s x$ and $clc^sx<l^s clx$ for any $x \in \mathbf{R}$. If $\frac{p}{q}\ge 2s+3$, then for any $x\in \mathbf{R}$, we have $c x\ge k^{-p+(2s+3)q}cx =M^{s-2}LM^{s+5}cx=\bar{c}^{s} l c l^s c l^s c l \bar{c}^{s+3}x=\bar{c}^{s} (l c l^s) c (l^s c l) \bar{c}^{s+3}x>\bar{c}^{s} (c^s l c) c (c l c^s) \bar{c}^{s+3}x=l c^3 l \bar{c}^3 x=lc^2\bar{l} (lcl) \bar{c}^3 x>lc^2\bar{l} (clc) \bar{c}^3 x=l(l\bar{c}^2)^{-1} c (l\bar{c}^2)x>lx.$
\end{proof}

If $\frac{p}{q}\ge 2s+3$, by Lemma \ref{clc} and Lemma \ref{c-l}, for any $x\in \mathbf{R}$, we have $cl x=(clc)c^{-1} x<(lcl)c^{-1}x=(lc)l(c^{-1}x)< lc x=c^{-1}(clc)x<c^{-1}(lcl)x=c^{-1} l(clx)< clx$, which is a contradiction. Therefore, we proved Theorem \ref{main}.

\section{The Proof of Theorem \ref{main-plus}}\label{sec-main-plus}
\subsection{$K_s$ is lean}\label{lean}
It is shown \cite{Oertel} that any $(p,q,r)$-pretzel knot is small, which means, the knot complement of $K_s$ does not contain closed essential surfaces. Hence, up to isotopy, we may assume (see \cite[Corollary following Theorem 4.11]{Thurston}) the closed essential surfaces in the manifold $S_0^3(K_s)$ obtained by longitudinal Dehn surgery corresponds to incompressible, $\partial$-incompressible orientable surfaces in the knot complement of $K_s$ with $0$-slope boundary curves. These surfaces can be determined by applying the results of \cite{Hatcher-O}, where Allen Hatcher and Ulrich Oertel used the edgepath system model to describe the incompressible, $\partial$-incompressible surfaces with non-empty, non-meridional boundary in the knot complement of the Montesinos knots. See \cite{Hatcher-O} for the definition of the infinite strip $\mathscr{S}$, the diagram $\mathscr{D}$ and the edgepaths in $\mathscr{D}$. We call an edgepath system admissible, if its edgepaths satisfy the properties (E1)-(E4) in \cite{Hatcher-O}. For each admissible edgepath system and each number $m$ dividing the minimum number of sheets $m_0$ for the system, there are finitely many candidate surfaces $S$ associated to the edgepath system and having $m$ boundary components, depending on the two choices for each of the finitely many saddle points. By \cite[Proposition 1.1]{Hatcher-O} and the remark following it, any incompressible, $\partial$-incompressible surface in the knot complement of $K_s$ with finite slope boundary curves, including non-orientable and disconnected ones, is isotopic to one of the candidate surfaces. 

Jesús Rodríguez-Viorato and Francisco Gonzaléz Acuña \cite{Jesus,Jesus-2} considered possible orientations of the candidate surfaces. The edges $\langle p/q,r/s \rangle$ with $\left|\frac{p}{q}-\frac{r}{s}\right|=1$ in diagram $\mathscr{D}$  are colored with three colors, $\langle 1/0,0/1 \rangle$, $\langle 1/0,1/1 \rangle$ and $\langle 1/1,0/1 \rangle$ according to the parity of $p$, $q$, $r$ and $s$. An edgepath in diagram $\mathscr{D}$ is called monochromatic, if it is constant or it is contained in the union of some monochromatic edges. By the proof of \cite[Lemma 2.6]{Jesus-2}, for an admissible system of monochromatic edgepath, all connected orientable candidate surfaces associated to the edgepath system have the same number of boundary components. 

Following \cite{Jesus}, the admissible edgepath systems are divided into three types. An admissible edgepath system is called type I, if all of the edgepath ends are on the right side of the left border of $\mathscr{S}$. An admissible edgepath system is called type II, if all of the edgepath ends are on the left border of $\mathscr{S}$. An admissible edgepath system is called type III, if all of the edgepath ends are on the left side of the left border of $\mathscr{S}$. Note that the definitions of type I and type II are slightly different from the original definitions in $\cite{Hatcher-O}$.

According to \cite{Hatcher-O}, a Seifert surface can be found by taking a candidate surface of connected boundary associated to a certain type III admissible system of monochromatic edgepaths. For the knot $K_s$, the edgepaths in this system always move upwards before heading to the common ending point $\langle\infty\rangle$.

A non-constant edgepath in any admissible edgepath system for the knot $K_s$ either always move upwards or always move downwards. Therefore, the slope of the boundary curves of a candidate surface associated to a type II or type III admissible edgepath system for the knot $K_s$ has at most $2^3=8$ possible values, which can be computed similarly to \cite[p.462]{Hatcher-O} and \cite[Table 2]{Jesus}. The possible slopes are listed in the table below. The only zero entry corresponds to the unique edgepath system that produces Seifert surfaces.

\begin{table}[!htbp]

\centering
\begin{tabular}{ |c||c|c|  }
 \multicolumn{3}{c}{Slope List} \\
 \hline
 Directions of edgepaths& Type II &Type III\\
 \hline
$+++$  & Not admissible   & $0$\\
 \hline
$++-$  &   $4s+4$  & $4s+2$\\
 \hline
$+ - +$ & $8$ & $6$\\
 \hline
$+ - -$ & $4s+8$ & $4s+8$\\
 \hline
$- + +$&   $6$  & $4$\\
 \hline
$- + -$& $4s+6$  & $4s+6$\\
 \hline
$- - +$& $10$  & $10$\\
 \hline
$- - -$& $4s+10$  & $4s+12$\\
 \hline
\end{tabular}

\end{table}

By the proof of \cite[Lemma 4.4]{Jesus-2}, the slope of the boundary curves of a candidate surface associated to a type I admissible edgepath system for the knot $K_s$ is always positive. 

Therefore, any incompressible, $\partial$-incompressible connected orientable surface in the knot complement of $K_s$ with $0$-slope boundary curves are isomorphic to a Seifert surface. Since $K_s$ is fibered \cite{gabai-2} and any Seifert surface of a fibered knot is isotopic to a given fiber \cite{Zieschang}, it follows that $K_s$ is lean.

\subsection{Roots of the Alexander polynomial of $K_s$}\label{Alexander}
By \cite[Theorem 1.2]{Hironaka}, for an odd number $k$ and positive integers $p_1,\ldots,p_k$, the Alexander polynomial of a $(p_1,\ldots,p_k,-1)$-pretzel link can be written as $Q_{p_1,\ldots,p_k}(-x)$, where we define $$Q_{p_1,\ldots,p_k}(x)=[p_1][p_2]\cdots[p_k]\left(x-k+1+\sum_{i=1}^k \frac{1}{[p_i]}\right),$$ 
and $[n]=\frac{1-x^n}{1-x}$. In the case where $p_1=2$, the $(2,p_2,\ldots, p_k,-1)$-pretzel link is equivalent to the $(-2,p_2,\ldots,p_k)$-pretzel link by a single flipping operation. By \cite[Theorem 2.6]{parry}, the polynomial $Q_{p_1,\ldots,p_k}(x)$ is also the denominator of the growth function of the $(p_1,\ldots,p_k)$-Coxeter reflection group $\langle g_1,\ldots, g_k| g_i^2, (g_i g_{i+1})^{p_i}\rangle$, and if the Coxeter system is hyperbolic, then the polynomial $Q_{p_1,\ldots,p_k}(x)$ is a product of distinct irreducible cyclotomic polynomials and exactly one Salem polynomial. Hence, if $$\sum_{i=1}^k{\frac{1}{p_i}}<k-2,$$ then all roots of $Q_{p_1,\ldots,p_k}(x)$ are simple, with exactly two of them real, and the rest on the unit circle. Therefore, the Alexander polynomial of $K_s$ has $2s+2$ simple roots on the unit circle.

In conclusion, by Theorem \ref{culler-dunfield}, we proved Theorem \ref{main-plus}.

\end{document}